\newcommand{\textcyr}[1]{{\fontencoding{OT2}\fontfamily{wncyr}\fontseries{m}\fontshape{n}\selectfont #1}}
\newcommand{\Sha}{{\mbox{\textcyr{Sh}}}}
\newcommand{\Z}{{\mathbb Z}}
\newcommand{\Q}{{\mathbb Q}}
\newcommand{\R}{{\mathbb R}}
\newcommand{\F}{{\mathbb F}}
\newcommand{\Kbar}{{\overline{K}}}
\newcommand{\To}{\longrightarrow}
\newcommand{\defi}[1]{\textsf{#1}} % for defined terms
\DeclareMathOperator{\res}{res}
\DeclareMathOperator{\Gal}{Gal}
\DeclareMathOperator{\HH}{H}
\newtheorem*{thmI}{Theorem}
\newtheorem{Theorem}{Theorem}[section]
\newtheorem{Lemma}[Theorem]{Lemma}
\newtheorem{Proposition}[Theorem]{Proposition}
\newtheorem{Corollary}[Theorem]{Corollary}
\newtheorem{Remark}[Theorem]{Remark}
\numberwithin{equation}{section}
\numberwithin{table}{section}
\begin{document}%%%%%%%%%%%%%%%%%%%%%%%%
%%%%%%%%%%%%%%%%%%%%%%%%%%%%%%%%%
%%%%%%%%%%%%%%%%%%%%%%%%%%%%%%%%%

\begin{abstract}
For every prime power $p^n$ with $p = 2$ or $3$ and $n\ge 2$ we give an example of an elliptic curve over $\Q$ containing a rational point which is locally divisible by $p^n$ but is not divisible by $p^n$. For these same prime powers we construct examples showing that the analogous local-global principle for divisibility in the Weil-Ch\^atelet group can also fail.
\end{abstract}

\title{On the local-global principle for divisibility in the cohomology of elliptic curves}

\author{Brendan Creutz}
\address{School of Mathematics and Statistics, The University of Sydney, NSW 2006, Australia}
\email{brendan.creutz@maths.usyd.edu.au}
\urladdr{http://magma.maths.usyd.edu.au/\~{}bcreutz}
%\date{ May 24 2013}

\maketitle

%%%%%%%%%%%%%%
\section{Introduction}%
%%%%%%%%%%%%%%
	Let $G$ be a connected commutative algebraic group over a number field $k$, and let $n$ and $r$ be nonnegative integers. An element $\rho$ in the Galois cohomology group $\HH^r(k,G) := \HH^r(\Gal(\overline{k}/k),G(\overline{k}))$ is \defi{divisible by $n$} if there exists $\rho' \in \HH^r(k,G)$ such that $n\rho'=\rho$. We say $\rho$ is \defi{locally divisible by $n$} if, for all primes $v$ of $k$, there exists $\rho'_v \in \HH^r(k_v,G)$ such that $n\rho'_v = \res_v(\rho)$. It is natural to ask whether every element locally divisible by $n$ is necessarily divisible by $n$. When the answer is yes, we say the \defi{local-global principle for divisibility by $n$} holds.
	
	For $r=0$ and $G = \mathbb G_m$, the answer is given by the Grunwald-Wang theorem (see~\cite{CoNF}*{IX.1}); the local-global principle for divisibility by $n$ holds, except possibly when $8$ divides $n$. The case $r = 1$ and $G = \mathbb G_m$ is trivial in light of Hilbert's theorem 90. For $r \ge 2$ and general $G$, a result of Tate implies that the local-global principle for divisibility by $n$ always holds (see Theorem~\ref{thm:obst} below). 
	
	A study of the problem for $r=0$ and general $G$ was initiated by Dvornicich and Zannier in~\cite{DZ1}, with particular focus on elliptic curves in~\cites{DZexamples,DZ2,PRV-2}. For elliptic curves over $\Q$, their results show that the local-global principle for divisibility by a prime power $p^n$ holds for $n =1$ or $p \ge 11$, and they have constructed counterexamples for $p^n = 4$.\footnote{Paladino et al. have recently announced a proof of the local-global principle for powers of $5$ and $7$~\cite{PRV-3}.} For $r = 1$ and $G$ an elliptic curve, the question was in effect raised by Cassels \cite{CasselsIII}*{Problem 1.3}. In particular, he asked whether elements of $\HH^1(k,G)$ that are everywhere locally trivial must be divisible. In response, Tate proved the local-global principle for divisibility by a prime $p$~\cite{CasselsIV}. Cassels' question is considered again in~\cite{Bashmakov}, and recently by \c Ciperiani and Stix \cite{CipStix} who showed that, for elliptic curves over $\Q$, the local-global principle for divisibility by $p^n$ holds for all prime powers with $p \ge 11$. An example showing that it does not hold in general over $\Q$ for any $p^n = 2^n$ with $n \ge 2$ was constructed in~\cite{CreutzWCDiv}. 
	
	In this note we produce examples settling these questions for the remaining undecided powers of the primes $2$ and $3$. We prove the following.
	\begin{thmI}
		Let $n \ge 2$ be an integer, let $p \in \{2,3\}$ and let $r \in \{0,1\}$. Then there exists an elliptic curve $E$ over $\Q$ for which the local-global principle for divisibility by $p^n$ fails in $\HH^r(\Q,E)$.
	\end{thmI}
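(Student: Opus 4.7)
The plan is to reduce both the $r=0$ and $r=1$ cases to a question about the Tate-Shafarevich group of the finite Galois module $E[p^n]$. The multiplication-by-$p^n$ short exact sequence $0 \to E[p^n] \to E \to E \to 0$ yields connecting maps $\delta_r\colon \HH^r(\Q,E) \to \HH^{r+1}(\Q,E[p^n])$ whose kernels consist of the $p^n$-divisible classes. Comparing with the analogous local sequences, $\rho \in \HH^r(\Q,E)$ is locally divisible by $p^n$ if and only if $\delta_r(\rho) \in \Sha^{r+1}(\Q,E[p^n]) := \ker\bigl(\HH^{r+1}(\Q,E[p^n]) \to \prod_v \HH^{r+1}(\Q_v,E[p^n])\bigr)$. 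Hence the local-global principle for divisibility by $p^n$ in $\HH^r(\Q,E)$ fails precisely when the image of $\delta_r$ meets $\Sha^{r+1}(\Q,E[p^n])$ nontrivially, so the task is to exhibit, for each $(p,n,r)$ in the stated range, an elliptic curve over $\Q$ realizing such an intersection.

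For $r=0$, the image of $\delta_0$ is the Kummer image of $E(\Q)$ in $\HH^1(\Q,E[p^n])$, so I would follow the Dvornicich-Zannier template: choose curves with a constrained mod-$p^n$ Galois image $G \subset \mathrm{GL}_2(\Z/p^n)$ --- for instance those admitting a rational $3$-isogeny when $p=3$, or full rational $2$-torsion when $p=2$ --- compute the group cohomology $\HH^1(G,E[p^n])$, and isolate a non-zero class that restricts trivially on every cyclic subgroup of $G$; this forces local triviality at every place with cyclic decomposition group, i.e.\ at all unramified primes. After verifying the local conditions at the finitely many ramified primes and at infinity, one then locates a rational point whose Kummer image realizes the chosen class. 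For $r=1$ the picture is cleaner: by Poitou-Tate global duality combined with the Weil-pairing self-duality $E[p^n] \cong \mathrm{Hom}(E[p^n],\mu_{p^n})$, the groups $\Sha^1(\Q,E[p^n])$ and $\Sha^2(\Q,E[p^n])$ are in perfect duality, and since $\Sha^2(\Q,E) = 0$ (a consequence of Theorem~\ref{thm:obst}), every element of $\Sha^2(\Q,E[p^n])$ automatically lies in the image of $\delta_1$. Thus any curve with nontrivial $\Sha^1(\Q,E[p^n])$ yields an everywhere locally solvable torsor $\xi \in \HH^1(\Q,E)$ --- explicitly constructed via descent on a $p^n$-isogenous curve, as in \cite{CreutzWCDiv} --- which is locally but not globally divisible by $p^n$.

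The principal obstacle will be treating every $n \ge 2$ uniformly rather than handling each exponent separately. For a fixed $n$ the construction is a finite, if intricate, group-cohomological and local-analytic computation, but as $n$ grows the groups $\mathrm{GL}_2(\Z/p^n)$ and the local constraints become substantially more complex. The most promising route is to exhibit a family of curves indexed by $n$ (e.g.\ with prescribed reduction type and isogeny structure at a suitable auxiliary prime) for which the obstruction persists, or to prove an inductive lemma propagating a level-$n$ counterexample to level $n+1$ via a carefully chosen isogeny or twist. A secondary subtlety, specific to the $r=0$ case, is ensuring that the cohomology class is actually realized by a rational point and not merely by a Selmer element; this distinction does not arise for $r=1$, where every class in $\Sha^2(\Q,E[p^n])$ lifts to $\HH^1(\Q,E)$ automatically.
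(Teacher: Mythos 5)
Your reduction of both cases to producing suitable classes in $\Sha^{r+1}(\Q,E[p^n])$ in the image of the connecting map is the same framework the paper uses, and your $r=1$ observation --- that $\Sha^2(\Q,E)=0$ together with the Poitou--Tate self-duality of $E[p^n]$ turns any curve with $\Sha^1(\Q,E[p^n])\ne 0$ into a counterexample in $\HH^1(\Q,E)$ --- is correct and, for the theorem as stated, even a bit cleaner than the paper's route through Proposition~\ref{prop:tricotomy} (which works harder precisely to exhibit a non-divisible element that is itself everywhere locally trivial, answering Cassels' question). Two caveats: your phrase ``everywhere locally solvable torsor'' is not justified by your own argument, since the class $\rho$ with $\delta(\rho)=\alpha$ is only locally divisible by $p^n$, not locally trivial; and $\Sha^2(\Q,E)=0$ should be cited from Tate/Milne duality for abelian varieties rather than deduced from Theorem~\ref{thm:obst}, which only gives divisibility of locally trivial classes in $\HH^2$.

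The genuine gap is that the proposal never actually produces the required classes or curves, and the difficulty you yourself flag as ``the principal obstacle'' --- uniformity in $n$ --- is exactly where the paper has a concrete device that you lack. Lemma~\ref{lem:1impliesn} shows that a single class $\xi\in\HH^1(\Q,E[p])$ whose local restrictions lie in $\delta_p$ of the local torsion subgroups pushes forward under $E[p]\subset E[p^n]$ to an element of $\Sha^1(\Q,E[p^n])$ for every $n$, nontrivial as soon as $\xi\notin\delta_p(E(\Q)[p^{n-1}])$; applied with $\xi=\delta_p(P)$ this handles all $n\ge 2$ at once for $r=0$ (showing $p^{n-1}P$ is locally but not globally divisible by $p^n$), and it also supplies the nontrivial elements of $\Sha^1(\Q,E[p^n])$ that your $r=1$ argument presupposes. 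Neither of your suggested remedies (a family of curves indexed by $n$, or an induction on levels via isogenies or twists) is carried out, and no candidate curve, class, or local verification is exhibited for $p=3$, which is where the new content of the theorem lies: the paper obtains these from Euler/Selmer's explicit $3$-coverings of $x^3+y^3+dz^3=0$ (Lemmas~\ref{lem:selmer} and~\ref{lem:d'cond}), taking $d=30$ with an explicit rational point for $r=0$, and $d\in\{138,165,300,354\}$ together with Selmer's computations ($C(\Q)=\emptyset$ and $3\Sha^1(\Q,E)[3^\infty]=0$) for $r=1$. Your appeal to ``descent on a $p^n$-isogenous curve, as in \cite{CreutzWCDiv}'' covers only $p=2$; for $p=3$ nothing concrete is offered, the local conditions at ramified primes are not checked, and no rational point realizing the chosen class is found. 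As it stands the proposal is a plausible research plan rather than a proof.
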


%%%%%%%%%%%%%%%%%%
\subsection*{Acknowledgements}%
%%%%%%%%%%%%%%%%%%
It is a pleasure to thank Jakob Stix for a number of helpful comments, including pointing out case~\eqref{it:r>1} in Theorem~\ref{thm:obst}.

%%%%%%%%%%%%
\subsection*{Notation}%
%%%%%%%%%%%%
	Throughout the paper $p$ denotes a prime number, $m$ and $n$ are a positive integers, and $r$ is a nonnegative integer. As above, $G$ is a connected commutative algebraic group defined over a number field $k$ with a fixed algebraic closure $\overline{k}$. We will use $K$ to denote a field containing $k$ and use $\overline{K}$ to denote a fixed algebraic closure of $K$ containing $\overline{k}$. For a $\Gal(\overline{k}/k)$-module $M$, let $M^\vee$ denote its Cartier dual and define
	\[
		\Sha^r(k,M) := \ker\left(\HH^r(k,M) \stackrel{\prod \res_v}\To \prod_v\HH^r(k_v,M)\right)\,,
	\]
	the product running over all primes of $k$.

%%%%%%%%%%%%%%%%%%%%%%%%%%%%%%%%%%%%
\section{The obstruction to the local-global principle for divisibility}%
%%%%%%%%%%%%%%%%%%%%%%%%%%%%%%%%%%%%
	Because $K$ has characteristic $0$, multiplication by $n$ is a finite \'etale endomorphism of $G$. Hence, for any $r \ge 0$, the short exact sequence of $\Gal(\overline{K}/K)$-modules
	\[
		0 \to G[n] \stackrel{\iota}\to G \stackrel{n}\to G \to 0
	\]
	gives rise to an exact sequence,
	\begin{equation}\label{eq:Kummer1}
		\HH^{r}(K,G[n]) \stackrel{\iota_*}\To \HH^{r}(K,G) \stackrel{n_*}\To \HH^{r}(K,G) \stackrel{\delta_n}\To \HH^{r+1}(K,G[n]) \stackrel{\iota_*}\To \HH^{r+1}(k,G)\,.
	\end{equation}
	
	From this one easily sees that an element $\rho \in \HH^r(k,G)$ is locally divisible by $n$ if and only if $\delta_n(\rho) \in \Sha^{r+1}(k,G[n])$, and that $\rho$ is divisible by $n$ if and only if $\delta_n(\rho) = 0$. In particular, the local-global principle for divisibility by $n$ in $\HH^r(k,G)$ holds whenever $\Sha^{r+1}(k,G[n]) = 0$. Combining this observation with Tate's duality theorems yields the following.
		
	\begin{Theorem}\label{thm:obst}
		Assume any of the following:
		\begin{enumerate}
			\item\label{it:r=0} $r = 0$ and $\Sha^1(k,G[n]) = 0$;
			\item\label{it:r=1} $r = 1$ and $\Sha^1(k,G[n]^\vee) = 0$; or
			\item\label{it:r>1} $r \ge 2$.
		\end{enumerate}
		Then the local-global principle for divisibility by $n$ in $\HH^r(k,G)$ holds.
	\end{Theorem}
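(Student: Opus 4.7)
The plan is to exploit the observation immediately preceding the theorem: the local-global principle for divisibility by $n$ in $\HH^r(k,G)$ holds whenever $\Sha^{r+1}(k,G[n])=0$. Since $G$ is connected and $[n]$ is an étale isogeny, $G[n]$ is a finite Galois module, so in each of the three cases the required vanishing will be obtained by a one-line appeal to a classical global duality or vanishing result.

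Case \eqref{it:r=0} is immediate: here $r+1=1$, so the hypothesis is exactly the required vanishing. For case \eqref{it:r=1}, I would invoke Poitou--Tate global duality, which furnishes a perfect pairing $\Sha^2(k,G[n])\times\Sha^1(k,G[n]^\vee)\to\Q/\Z$; the vanishing of the right-hand factor therefore forces $\Sha^{r+1}(k,G[n])=\Sha^2(k,G[n])=0$. For case \eqref{it:r>1} we need $\Sha^i(k,G[n])=0$ for $i=r+1\ge 3$, and I would combine two standard facts about finite Galois modules over number fields of characteristic zero. First, nonarchimedean absolute Galois groups have strict cohomological dimension $2$, so $\HH^i(k_v,G[n])=0$ for every nonarchimedean $v$ and every $i\ge 3$. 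Second, Tate's theorem identifies the restriction map $\HH^i(k,M)\to\bigoplus_{v\text{ real}}\HH^i(k_v,M)$ as an isomorphism for $i\ge 3$. Together these show that the full restriction map to all completions is injective in degrees $\ge 3$, so its kernel $\Sha^i(k,G[n])$ is trivial.

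There is no substantive obstacle: the theorem is essentially a packaging statement that translates (local) divisibility into the vanishing of the image of $\delta_n$ in $\Sha^{r+1}$, after which each part is a direct citation of Tate's duality theorems or the cohomological dimension of local fields. The only genuinely nontrivial input is Poitou--Tate global duality in case \eqref{it:r=1}, and this is used only to swap $\Sha^2$ for the dual $\Sha^1$ of the Cartier dual module.
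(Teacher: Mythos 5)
Your proposal is correct and follows essentially the same route as the paper: reduce the theorem to the vanishing of $\Sha^{r+1}(k,G[n])$ via the Kummer sequence, note that case~\eqref{it:r=0} is the hypothesis itself, and handle cases~\eqref{it:r=1} and~\eqref{it:r>1} by Tate's global duality results. The only difference is presentational: where the paper cites Tate's duality theorem once for both remaining cases, you unpack it into the $\Sha^2$--$\Sha^1$ perfect pairing for case~\eqref{it:r=1} and the strict cohomological dimension of local fields together with the isomorphism onto real-place cohomology in degrees $\ge 3$ for case~\eqref{it:r>1}, which is the same content.
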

	
	\begin{proof}
		As noted above, in each case it suffices to show that $\Sha^{r+1}(k,G[n]) = 0$. Case~\eqref{it:r=0} is trivial, and cases~\eqref{it:r=1} and~\eqref{it:r>1} follow immediately from~\cite{Tate-duality}*{Theorem 3.1}.
	\end{proof}
	
	The following proposition shows that when $G$ is a principally polarized abelian variety, the conditions in the theorem are necessary, at least conjecturally.
	
	\begin{Proposition}\label{prop:tricotomy} 
		Suppose $G$ is an abelian variety with dual $G^\vee$. Then for every $\xi \in \Sha^1(k,G[n])$, exactly one of the following hold:
		\begin{enumerate}
			\item $\xi = 0\,;$
			\item\label{it:x=deltay} $\xi = \delta_n(\rho)$ for some $\rho \in G(k)$ that is locally divisible by $n$, but is not divisible by $n\,;$ or
			\item\label{it:iotax} $\iota_*(\xi) \ne 0$, in which case there either exists $\rho \in \Sha^1(k,G^\vee)$ such that $\rho$ is not divisible by $n$, or $\iota_*(\xi)$ is divisible in $\Sha^1(k,G)$ by all powers of $n$.
		\end{enumerate}
		If $G$ is a principally polarized abelian variety and $\Sha^1(k,G)$ is finite, then the local-global principle for divisibility by $n$ holds in $\HH^r(k,G)$ for every $r \ge 0$ if and only if $\Sha^1(k,G[n])=0$.
	\end{Proposition}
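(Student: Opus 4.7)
The plan is to read off the trichotomy from the Kummer sequence~\eqref{eq:Kummer1}, and then invoke the Cassels--Tate pairing for the case $\iota_*(\xi) \ne 0$ and for the non-trivial direction of the final equivalence. Given $\xi \in \Sha^1(k,G[n])$, consider its image $\iota_*(\xi) \in \HH^1(k,G)$. The three clauses correspond exactly to the mutually exclusive and exhaustive possibilities $\xi = 0$; $\xi \ne 0$ but $\iota_*(\xi) = 0$; and $\iota_*(\xi) \ne 0$. In the middle case, exactness of~\eqref{eq:Kummer1} at $\HH^1(k,G[n])$ gives $\rho \in G(k)$ with $\delta_n(\rho) = \xi$; by the remark preceding Theorem~\ref{thm:obst}, $\rho$ is locally divisible by $n$ because $\delta_n(\rho) = \xi \in \Sha^1(k,G[n])$, and is not divisible by $n$ because $\xi \ne 0$. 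This is clause~\eqref{it:x=deltay}.

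For the third case, $\iota_*(\xi)$ is a nonzero element of $\Sha^1(k,G)[n]$, and I would use the Cassels--Tate pairing $\langle \cdot,\cdot\rangle : \Sha^1(k,G) \times \Sha^1(k,G^\vee) \to \Q/\Z$, a classical theorem of Tate guaranteeing that its kernel on each side is the maximal divisible subgroup. If $\iota_*(\xi)$ is divisible in $\Sha^1(k,G)$ by every power of $n$, we are in the second alternative of clause~\eqref{it:iotax}. Otherwise $\iota_*(\xi)$ does not lie in the maximal divisible subgroup, so there exists $\rho \in \Sha^1(k,G^\vee)$ with $\langle \iota_*(\xi),\rho\rangle \ne 0$. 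Such $\rho$ cannot be divisible by $n$: writing $\rho = n\rho'$ would force $\langle \iota_*(\xi),\rho\rangle = n\langle \iota_*(\xi),\rho'\rangle = \langle n\,\iota_*(\xi),\rho'\rangle = 0$ since $n\,\iota_*(\xi) = 0$, a contradiction.

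For the final equivalence, $(\Leftarrow)$ is immediate from Theorem~\ref{thm:obst}: case~\eqref{it:r=0} covers $r = 0$ directly; the principal polarization gives $G[n] \cong G[n]^\vee$, so $\Sha^1(k,G[n]^\vee) = 0$ and case~\eqref{it:r=1} covers $r = 1$; case~\eqref{it:r>1} covers $r \ge 2$. For $(\Rightarrow)$ I argue contrapositively: pick nonzero $\xi \in \Sha^1(k,G[n])$ and apply the trichotomy. Clause~\eqref{it:x=deltay} already exhibits a failure of local-global divisibility in $\HH^0(k,G)$. In clause~\eqref{it:iotax}, the assumed finiteness of $\Sha^1(k,G)$ rules out the second alternative: in a finite abelian group $A$ the descending chain $A \supseteq nA \supseteq n^2A \supseteq \cdots$ stabilizes at a subgroup on which multiplication by $n$ is an isomorphism, so no nonzero $n$-torsion element can be divisible by every power of $n$. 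Hence the first alternative produces $\rho \in \Sha^1(k,G^\vee)$ that is not divisible by $n$ but is locally trivial (a fortiori locally divisible), and the principal polarization $G \cong G^\vee$ transports this into a failure in $\HH^1(k,G)$. The genuine input here is the nondegeneracy-modulo-divisibles property of the Cassels--Tate pairing used in clause~\eqref{it:iotax}; everything else is formal manipulation of~\eqref{eq:Kummer1} together with an elementary finite group argument.
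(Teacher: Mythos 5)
Your trichotomy, your handling of clause~\eqref{it:x=deltay}, and your finite-group argument in the final equivalence are all correct and essentially the paper's. The genuine gap is in clause~\eqref{it:iotax}. In this paper ``divisible by $n$'' means divisible in $\HH^1(k,G^\vee)$, and that is what the applications require (one concludes, e.g., $\Sha^1(\Q,E)\not\subset 9\HH^1(\Q,E)$); it is also what your own ($\Rightarrow$) direction of the final equivalence needs, since a failure of the local-global principle in $\HH^1(k,G)$ requires an element that is locally divisible but not divisible in the full group $\HH^1(k,G)$. Your bilinearity computation, however, silently assumes that the element $\rho'$ with $\rho=n\rho'$ lies in $\Sha^1(k,G^\vee)$: otherwise $\langle\iota_*(\xi),\rho'\rangle$ is simply not defined, because the Cassels--Tate pairing only accepts everywhere locally trivial classes. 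So what you actually prove is that $\rho$ is not divisible by $n$ \emph{within} $\Sha^1(k,G^\vee)$, a strictly weaker statement: such a $\rho$ could still equal $n\rho'$ for some $\rho'\in\HH^1(k,G^\vee)$ that is not locally trivial, and then no violation of the local-global principle results.

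The missing input is exactly what the paper cites at this point, namely \cite{CreutzWCDiv}*{Theorem 3}: $\Sha^1(k,G^\vee)\subset n\HH^1(k,G^\vee)$ if and only if the image of $\iota_*:\Sha^1(k,G[n])\to\Sha^1(k,G)$ lies in the maximal divisible subgroup of $\Sha^1(k,G)$. This is not a formal consequence of bilinearity; its proof rests on the compatibility of the Cassels--Tate pairing with the duality between $\Sha^1(k,G[n])$ and $\Sha^2(k,G^\vee[n])$ from \cite{Tate-duality}: up to sign, $\langle\iota_*(\xi),\rho\rangle$ equals the pairing of $\xi$ with $\delta_n(\rho)$, so that non-orthogonality of $\rho$ to $\iota_*(\Sha^1(k,G[n]))$ forces $\delta_n(\rho)\ne 0$, i.e.\ $\rho\notin n\HH^1(k,G^\vee)$. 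If you either invoke that theorem or prove this compatibility, the remainder of your argument --- including the appeal to Tate's theorem that the kernels of the Cassels--Tate pairing are the maximal divisible subgroups, and your observation that an element not divisible by all powers of $n$ cannot lie in the maximal divisible subgroup --- goes through and recovers the paper's proof.
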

	
	\begin{proof}
		Exactness of~\eqref{eq:Kummer1} implies that the cases in the first statement of the proposition are exhaustive and mutually exclusive. For the claim in case~\eqref{it:iotax} we may apply~\cite{CreutzWCDiv}*{Thm 3}, which states that $\Sha^1(k,G^\vee) \subset n\HH^1(k,G^\vee)$ if and only if the image of $\iota_*:\Sha^1(k,G[n]) \to \Sha^1(k,G)$ is contained in the maximal divisible subgroup of $\Sha^1(k,G)$.
		
		Now suppose $G$ is a principally polarized and that $\Sha^1(k,G)$ is finite. We must prove the equivalence in the second statement. One direction follows from Theorem~\ref{thm:obst} since $G[n] = G[n]^\vee$. The other direction follows from the first statement in the proposition, since finiteness of $\Sha^1(k,G)$ implies that it contains no nontrivial divisible elements as in case~\eqref{it:iotax}.
	\end{proof}
		
	The next lemma formalizes a method for constructing elements of $\Sha^1(k,G[mn])$, for some $m \ge 1$.
	\begin{Lemma}\label{lem:1impliesn}
		Let $m \ge 1$ and let $j : G[n] \subset G[mn]$ be the inclusion map. Suppose $\xi \in \HH^1(k,G[n])$ is such that $\res_v(\xi) \in \delta_n(G(k_v)[m])$, for all primes $v$ of $k$. Then
		\begin{enumerate}
			\item $j_*(\xi) \in \Sha^1(k,G[mn])$;
			\item $j_*(\xi) = 0$ if and only if $\xi \in \delta_n(G(k)[m])$;
			\item if $\xi = \delta_n(\rho)$ for some $\rho \in G(k)$, then $m\rho$ is locally divisible by $mn$; and
			\item if $\xi = \delta_n(\rho)$ for some $\rho \in G(k)$ and $j_*(\xi) \ne 0$, then $m\rho$ is not divisible by $mn$.
		\end{enumerate}
	\end{Lemma}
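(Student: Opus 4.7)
The plan rests on a single functorial identity linking the Kummer connecting maps for multiplication by $n$ and by $mn$. For any $\rho \in G(K)$ (with $K/k$ an arbitrary extension), choose $\bar\rho \in G(\overline{K})$ with $n\bar\rho = \rho$; then $mn\bar\rho = m\rho$, so the same $\bar\rho$ is a lift of $m\rho$ along multiplication by $mn$. Chasing cocycle representatives through the two Kummer sequences yields the identity
\[
\delta_{mn}(m\rho) = j_*(\delta_n(\rho)) \in \HH^1(K, G[mn]).
\]
Equivalently, the short exact Kummer sequences for $n$ and for $mn$ fit into a commutative ladder whose vertical maps are multiplication by $m$ on $G(K)/nG(K) \to G(K)/mnG(K)$, the map $j_*$ on $\HH^1(K,G[n]) \to \HH^1(K,G[mn])$, and the inclusion $\HH^1(K,G)[n] \hookrightarrow \HH^1(K,G)[mn]$.

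Parts (1), (3), (4) fall out immediately. The hypothesis of (1) supplies $\rho_v \in G(k_v)[m]$ with $\res_v(\xi) = \delta_n(\rho_v)$, so $\res_v(j_*(\xi)) = \delta_{mn}(m\rho_v) = 0$. For (3), once $\xi = \delta_n(\rho)$ the identity gives $\delta_{mn}(m\rho) = j_*(\xi)$, which by (1) restricts to zero at every place; by exactness of the local $mn$-Kummer sequence, this is the statement that $m\rho$ is divisible by $mn$ in $G(k_v)$ for every $v$. Part (4) is the contrapositive: $m\rho \in mnG(k)$ would force $\delta_{mn}(m\rho) = j_*(\xi) = 0$.

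Part (2) is the only item requiring a short diagram chase. The backward direction is a further application of the identity: $\tau \in G(k)[m]$ and $\xi = \delta_n(\tau)$ imply $j_*(\xi) = \delta_{mn}(m\tau) = 0$. For the forward direction, assume $j_*(\xi) = 0$. Commutativity of the rightmost square in the ladder, together with injectivity of $\HH^1(k,G)[n] \hookrightarrow \HH^1(k,G)[mn]$, kills the image of $\xi$ in $\HH^1(k,G)[n]$, so exactness of the $n$-Kummer sequence gives $\xi = \delta_n(\rho)$ for some $\rho \in G(k)$. Then $\delta_{mn}(m\rho) = j_*(\xi) = 0$, so $m\rho = mn\rho'$ for some $\rho' \in G(k)$; the element $\tau := \rho - n\rho'$ then lies in $G(k)[m]$ and satisfies $\delta_n(\tau) = \delta_n(\rho) = \xi$.

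No step presents real difficulty, since the argument is entirely formal. The only item requiring care is the final correction by $n\rho'$ in the forward direction of (2), needed to ensure that the resulting witness actually lies in the $m$-torsion subgroup rather than only in $G(k)$.
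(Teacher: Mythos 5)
Your proof is correct, and its core identity $\delta_{mn}(m\rho)=j_*(\delta_n(\rho))$ is exactly the commutativity of the diagram the paper writes down (its diagram~\eqref{eq:Kummer2}), which the paper likewise uses for parts (3) and (4). Where you diverge is in parts (1) and (2): the paper first considers the short exact sequence $0 \to G[n] \stackrel{j}\to G[mn] \stackrel{n}\to G[m] \to 0$, observes that its connecting map is the restriction of $\delta_n$ to $G(K)[m]$, and so obtains the exact identification $\ker\bigl(j_*:\HH^1(K,G[n]) \to \HH^1(K,G[mn])\bigr)=\delta_n(G(K)[m])$, from which (1) and (2) are immediate (over $k_v$ for (1), over $k$ for (2)). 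You never invoke that sequence; instead you deduce (1) from the identity applied to the local witnesses $\rho_v\in G(k_v)[m]$ (so $\delta_{mn}(m\rho_v)=0$), and you prove the forward direction of (2) by a chase through the ladder: $j_*\xi=0$ forces $\iota_*\xi=0$, hence $\xi=\delta_n(\rho)$, then $m\rho=mn\rho'$, and the correction $\tau=\rho-n\rho'$ lands in $G(k)[m]$ with $\delta_n(\tau)=\xi$ --- all of which is valid. The trade-off is minor: the paper's kernel identification delivers (2) in one stroke as a clean equivalence, while your approach is more uniform (one diagram drives all four statements) at the cost of the explicit correction step, which you carry out correctly.
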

	
	\begin{proof}
		The connecting homomorphism $G(K)[m] \to \HH^1(k,G[n])$ arising from the short exact sequence
		\[
			0 \to G[n] \stackrel{j}\to G[mn] \stackrel{n}\to G[m] \to 0
		\]
		is the restriction of the $\delta_n$ to $G(K)[m]$. This implies that 
		\[
			\ker \left(j_*:\HH^1(K,G[n]) \to \HH^1(K,G[mn])\right) = \delta_n(G(K)[m])\,,
		\]
		from which the first two statements in the proposition easily follow.
		
		The inclusion $j:G[n] \subset G[mn]$ also induces a commutative diagram
		\begin{equation}\label{eq:Kummer2}
			\xymatrix{
					G(K)[n] \ar@{^{(}->}[r]\ar[d]^j
					&G(K)\ar[r]^n\ar@{=}[d]
					&G(K)\ar[rr]^{\delta_n} \ar[d]^{m}
					&&\HH^1(K,G[n]) \ar[d]^{j_*}\\
					G(K)[mn] \ar@{^{(}->}[r]
					&G(K)\ar[r]^{mn}
					&G(K)\ar[rr]^{\delta_{mn}} 
					&&\HH^1(K,G[mn])\,,
			}
		\end{equation}
		where the rows are the exact sequence~\eqref{eq:Kummer1} with $r = 0$, and the same sequence with $mn$ in place of $n$. From this the last two statements can be deduced easily.
	\end{proof}

%%%%%%%%%%%%%%%%%%%%%
\section{The examples for $p = 2$}%
%%%%%%%%%%%%%%%%%%%%%

	\begin{Proposition}
	\label{prop:HPby2nfailsinH0}
		Let $E$ be the elliptic curve defined by $y^2 = (x+2795)(x-1365)(x-1430)$ and let $P = (341:59136:1) \in E(\Q)$. For every $n \ge 2$, the point $2^{n-1}P$ is locally divisible by $2^n$, but not divisible by $2^n$. In particular, the local-global principle for divisibility by $2^n$ in $E(\Q)$  fails for every $n \ge 2$.
	\end{Proposition}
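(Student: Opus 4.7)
The plan is to apply Lemma~\ref{lem:1impliesn} with the lemma's $n$ taken to be $2$ and its $m$ taken to be $2^{n-1}$ (where $n \ge 2$ is the integer in the proposition), with $\rho = P$ and $\xi := \delta_2(P) \in \HH^1(\Q, E[2])$. Parts~(3) and~(4) of the lemma then yield the proposition once we establish: (i) $\res_v(\xi) \in \delta_2(E(\Q_v)[2^{n-1}])$ for every place $v$, and (ii) $\xi \notin \delta_2(E(\Q)[2^{n-1}])$ (so that $j_*(\xi) \ne 0$ by part~(2) of the lemma). Both conditions are strongest at $n=2$ once we have ruled out rational $4$-torsion, so it suffices to work at the level of the rational $2$-torsion.

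Since $E[2]$ is rational, Kummer theory identifies $\HH^1(\Q, E[2])$ with the kernel of the product map $(\Q^*/\Q^{*2})^3 \to \Q^*/\Q^{*2}$, under which $\delta_2$ sends an affine point $(x,y)$ to $(x-e_1, x-e_2, x-e_3)$ for $e_1 = -2795$, $e_2 = 1365$, $e_3 = 1430$. Direct substitution gives $\delta_2(P) = (56^2, -2^{10}, -33^2) \equiv (1,-1,-1)$, together with
\[
\delta_2(T_1) \equiv (65,-65,-1),\quad \delta_2(T_2) \equiv (65,-1,-65),\quad \delta_2(T_3) \equiv (1,65,65)
\]
for $T_i = (e_i, 0)$. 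Inspecting the four ratios $\xi / \delta_2(T)$ with $T \in \{O, T_1, T_2, T_3\}$ shows that (i) at a place $v$ is equivalent to at least one of $-1$, $65$, $-65$ being a square in $\Q_v^*$.

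This condition holds automatically at every odd prime $v \nmid 65$: there the three classes are units lying in the order-$2$ subgroup of $\Q_v^*/\Q_v^{*2}$, and their product $65^2$ is trivial, forcing at least one to be trivial. The remaining places $v \in \{2, 5, 13, \infty\}$ are handled by direct calculation: $65 \equiv 1 \pmod 8$ makes $65$ a square in $\Q_2$; $-1$ is a square in both $\Q_5$ and $\Q_{13}$ (since $5 \equiv 1 \pmod 4$ and $5^2 \equiv -1 \pmod{13}$); and $65 > 0$ handles the archimedean place.

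For (ii), none of $-1$, $65$, $-65$ is a rational square, so $\xi$ is distinct from each of $0$, $\delta_2(T_1)$, $\delta_2(T_2)$, $\delta_2(T_3)$ in $\HH^1(\Q, E[2])$. The same observation forces each $\delta_2(T_i)$ to be nonzero, whence no $T_i$ is $2$-divisible over $\Q$; thus $E(\Q)$ has no rational $4$-torsion, and $E(\Q)[2^{n-1}] = E(\Q)[2]$ for every $n \ge 2$. The main content of the argument is (i): the careful choice of $P$ is what arranges $\{-1, 65, -65\}$ to meet the squares at every completion of $\Q$, so that $\xi$ becomes locally indistinguishable from some $\delta_2(T_v)$ everywhere, while the global obstructions used by (ii) are transparent.
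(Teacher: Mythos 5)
Your proposal is correct and follows essentially the same route as the paper: apply Lemma~\ref{lem:1impliesn} with $(m,n)=(2^{n-1},2)$ to $\xi=\delta_2(P)$, reduce local divisibility to the condition that one of $-1$, $65$, $-65$ is a square in each $\Q_v$ (your ``product of three unit classes is trivial'' argument at odd $v\nmid 65$ is just the Legendre-symbol identity in the paper), and use the rational non-squareness of these elements for the global obstruction. The only difference is a minor sub-step: the paper rules out rational $2$-power torsion beyond $E(\Q)[2]$ by reducing mod $3$ and invoking Hasse's bound, whereas you deduce the absence of rational $4$-torsion from the nonvanishing of the classes $\delta_2(T_i)$, which works equally well.
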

	
	\begin{Remark}
		This example was constructed by Dvornicich and Zannier who proved the proposition in the case $n = 2$~\cite{DZexamples}*{\S4}. Using Lemma~\ref{lem:1impliesn} their arguments apply to all $n \ge 2$. We include our own proof here since our examples for $p = 3$ will be obtained using a similar, though more involved argument.
	\end{Remark}
	
	\begin{proof}
		Fix the basis $P_1 = (1365:0:1)$, $P_2 = (1430:0:1)$ for $E[2]$.  By \cite{Silverman}*{Proposition X.1.4} the composition of $\delta_2$ with isomorphism $\HH^1(K,E[2]) \simeq \left(K^\times/K^{\times 2}\right)^2$ is given explicitly by
		\[ 
			Q = (x_0,y_0) \longmapsto 
			\begin{cases} 
				(x_0-1365,x_0-1430) & \mbox{if } Q \ne P_1,P_2 \\
				(-1,-65) &  \mbox{if } Q = P_1 \\
				(65,65) & \mbox{if } Q = P_2 \\
				(1,1) & \mbox{if } Q = 0 
			\end{cases}\,.
		\]
		In particular, $\delta_2(P) = (-1,-1)$ and $\delta_2(E(K)[2])$ is generated by $\{ (-1,-65),(65,65) \}$. It follows that $\delta_2(P) \in \delta_2(E(K)[2])$ if and only if  at least one of $65$, $-65$ or $-1$ is a square in $K$. If $K = \Q_v$ for some $v \le \infty$, then one of these is a square. Indeed, $65$ is a square in $\R$ and in $\Q_2$, $-1$ is a square $\Q_5$ and in $\Q_{13}$, and for all other primes $v$ the Legendre symbols satisfy the identity $\left(\frac{-1}{v}\right)\left(\frac{65}{v}\right)=\left(\frac{-65}{v}\right)$. Hence $\xi := \delta_2(P)$ satisfies the hypothesis of  Lemma~\ref{lem:1impliesn} with $(m,n)$ replaced by $(2^{n-1},2)$.
		
		On the other hand, $65$, $-65$ and $-1$ are not squares in $\Q$, and $E(\Q)[2^\infty] = E(\Q)[2]$ (the reduction mod $3$ is nonsingular, so the $2$-primary torsion must inject into the group of $\F_3$-points on the reduced curve. This group has order less than $8$ by Hasse's theorem). So the result follows from Lemma~\ref{lem:1impliesn}.		
	\end{proof}
	
	\begin{Proposition}
		Let $E$ be the elliptic curve defined by $y^2 = x(x+80)(x+205)$. Then $\Sha^1(\Q,E) \not\subset 4\HH^1(\Q,E)$. In particular, the local-global principle for divisibility by $2^n$ in $\HH^1(\Q,E)$ fails for every $n \ge 2$.
	\end{Proposition}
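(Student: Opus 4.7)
The plan is to mirror Proposition~\ref{prop:HPby2nfailsinH0} but aim for a non-4-divisible class in $\Sha^1(\Q,E)$ via Proposition~\ref{prop:tricotomy}\eqref{it:iotax}. I would first use the basis $P_1 = (0,0)$, $P_2 = (-80,0)$ for $E[2]$ and \cite{Silverman}*{Prop.~X.1.4} to identify $\HH^1(K, E[2])$ with $(K^\times/K^{\times 2})^2$, then compute $\delta_2(P_1) \equiv (41, 5)$ and $\delta_2(P_2) \equiv (-5, -1)$ modulo squares. The image $H := \delta_2(E[2]) \subset (\Q^\times/\Q^{\times 2})^2$ is the order-four subgroup generated by these classes, and it coincides with $\delta_2(E(\Q_v)[2])$ at every completion.

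The key step is to exhibit $\xi \in (\Q^\times/\Q^{\times 2})^2$ that lies in $H$ at every completion but not in $H$ globally. The natural candidate is $\xi = (-5, -5)$, which is plainly not in the four-element set $H$. I would verify local membership directly at $v \in \{2, 5, 41, \infty\}$ in each $(\Q_v^\times/\Q_v^{\times 2})^2$, using that $41 \equiv 1 \pmod{8}$, both $-1$ and $5$ are squares in $\Q_{41}$, and $-1$ is a square in $\Q_5$; at each remaining prime $p$ I would run an eight-case analysis in the Legendre symbols $\left(\frac{-1}{p}\right), \left(\frac{5}{p}\right), \left(\frac{41}{p}\right)$ confirming $\xi \in H_p$ in every sign pattern. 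Lemma~\ref{lem:1impliesn} applied with $m = n = 2$ then produces $\eta := j_*(\xi) \in \Sha^1(\Q, E[4])$ with $\eta \neq 0$.

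To invoke Proposition~\ref{prop:tricotomy}\eqref{it:iotax}, I must verify $\iota_*(\eta) \neq 0$ in $\HH^1(\Q, E)$. Since the inclusion $E[2] \hookrightarrow E[4] \hookrightarrow E$ is transitive, $\iota_*(\eta) = \iota_*(\xi)$, so this reduces to $\xi \notin \delta_2(E(\Q))$. The point count $|\tilde{E}(\F_3)| = 4$ forces $E(\Q)_{\mathrm{tors}} = E[2]$, so once a 2-descent establishes $\operatorname{rank} E(\Q) = 0$, one has $\delta_2(E(\Q)) = H$ and hence $\xi \notin \delta_2(E(\Q))$. Finally, I would invoke finiteness of $\Sha^1(\Q, E)$ (verified by a 4-descent or cited from a BSD computation in analytic rank zero) to rule out the alternative in case~\eqref{it:iotax} that $\iota_*(\eta)$ is infinitely 4-divisible, producing the desired element of $\Sha^1(\Q, E) \setminus 4 \HH^1(\Q, E)$. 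The ``in particular'' statement follows immediately since $2^n \HH^1(\Q, E) \subseteq 4 \HH^1(\Q, E)$ for $n \ge 2$.

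The principal obstacle is the finiteness of $\Sha^1(\Q, E)$ for this specific curve, which requires a descent calculation beyond the setup already in place; by contrast, the reciprocity verification producing $\xi$ is a direct finite check, fully analogous to the $\{65, -65, -1\}$ identity used in Proposition~\ref{prop:HPby2nfailsinH0}.
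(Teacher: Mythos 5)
Your proposal follows essentially the same route as the paper's (sketched) proof, which cites \cite{CreutzWCDiv}*{Theorem 5}: exhibit a class in $\HH^1(\Q,E[2])$ lying in $\delta_2(E(\Q_v)[2])$ at every place but not in $\delta_2(E(\Q))$, push it into $\HH^1(\Q,E[4])$ via Lemma~\ref{lem:1impliesn}, and conclude via case~\eqref{it:iotax} of Proposition~\ref{prop:tricotomy} together with finiteness of $\Sha^1(\Q,E)[2^\infty]$. Your explicit candidate $(-5,-5)$ and the local checks are correct (at $v=5$ one should note that $41\in\Q_5^{\times 2}$ is what makes the membership work, not just $-1$ being a square), and the computational inputs you defer (rank zero and finiteness of $\Sha$) are precisely the ones the paper likewise leaves to computation or to the cited reference.
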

	
	\begin{proof}
		This is \cite{CreutzWCDiv}*{Theorem 5}; we are content to sketch the proof. Much like the previous proof, one uses the explicit description of the map $\delta_2: E(K) \to \HH^2(K,E[2])\simeq \left(K^\times/K^{\times 2}\right)^2$ to show that there is an element $\xi \in \HH^1(\Q,E[2]) \setminus \delta_2(E(\Q))$ which maps into $\delta_2(E(\Q_v))$ everywhere locally. Lemma~\ref{lem:1impliesn} then shows that the image of $\xi$ in $\HH^1(k,E[4])$ falls under case~\eqref{it:iotax} of Proposition~\ref{prop:tricotomy}. This gives the result, since $\Sha^1(\Q,E)[2^\infty]$ is finite (as one can check in multiple ways, with or without the assistance of a computer).			
	\end{proof}

%%%%%%%%%%%%%%%%%%%%%%%%%%
\section{Diagonal cubic curves and $3$-coverings}%
%%%%%%%%%%%%%%%%%%%%%%%%%%
	The examples for $p=2$ were constructed using an explicit description of the map
	\[
		E(K) \stackrel{\delta_2}\To \HH^1(K,E[2]) \simeq \left(K^\times/K^{\times 2}\right)^2\,.
	\]
	Another way to describe the connecting homomorphism is in the language of $n$-coverings. An \defi{$n$-covering} of an elliptic curve $E$ over $K$ is a $K$-form of the multiplication by $n$ map on $E$. In other words, an $n$-covering of $E$ is a morphism $\pi : C \to E$ such that there exists an isomorphism $\psi:E_\Kbar \to C_\Kbar$ of the curves base changed to the algebraic closure $\Kbar$ which satisfies $\pi \circ \psi = n$. We now summarize how this notion can be used to give an interpretation of the group $\HH^1(K,E[n])$. Details may be found in \cite{CFOSS1}*{\S1}. 
	
	An isomorphism of $n$-coverings of $E$ is, by definition, an isomorphism in the category of $E$-schemes. The automorphism group of the $n$-covering $n:E\to E$ can be identified with $E[n]$ acting by translations. By a standard result in Galois cohomology (the twisting principle) the $K$-forms of $n:E\to E$ are parameterized, up to isomorphism by $\HH^1(K,E[n])$. Under this identification the connecting homomorphism $\delta_n$ sends a point $P \in E(K)$ to the isomorphism class of the $n$-covering,
	\[
		\pi_P : E \to E\,,\quad Q \mapsto nQ + P\,.
	\]
	In particular, the isomorphism class of an $n$-covering $\pi:C\to E$ is equal to $\delta_n(P)$ if and only if $P \in \pi(C(K))$.

	Our examples for $p = 3$ will come from elliptic curves of the form $E : x^3 + y^3 + dz^3 = 0$ with distinguished point $(1:-1:0)$, where $d \in \Q^\times$. For these curves we can write down some of the $3$-coverings quite explicitly. According to Selmer, the following lemma goes back to Euler (see \cite{Selmer}*{Theorem 1}).
	
	\begin{Lemma}\label{lem:selmer}
		Let $E : x^3 + y^3 + dz^3 = 0$ and suppose $a,b,c \in \Q^\times$ are such that $abc = d$. Then the curve $C: aX^3 + bY^3 + cZ^3 = 0$ together with the map $\pi : C \to E$ defined by 
			\begin{align*}
				x+y &= 9abcX^3Y^3Z^3\\
				x-y  &= (aX^3-bY^3)(bY^3-cZ^3)(cZ^3-aX^3)\\
				z &= 3(abX^3Y^3 + bcY^3Z^3 + caZ^3X^3)XYZ
			\end{align*}
		is a $3$-covering of $E$.	
	\end{Lemma}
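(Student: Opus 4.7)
The plan is to verify two things: (i) the given formulas define a morphism $\pi\colon C\to E$, i.e.\ the image of every point of $C$ lies on $E$; and (ii) $\pi$ is a form of $[3]$, in the sense that $\pi\circ\psi=[3]$ for some isomorphism $\psi\colon E_{\overline\Q}\to C_{\overline\Q}$.

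For (i) I would use the identity $4(x^3+y^3)=(x+y)^3+3(x+y)(x-y)^2$ to reduce the desired equation $x^3+y^3+dz^3=0$ to a single polynomial identity. Substituting the given expressions for $x+y$, $x-y$, and $z$, one obtains a polynomial of degree $27$ in $X,Y,Z$, with coefficients in $\Z[a,b,c]$, that must lie in the ideal generated by $aX^3+bY^3+cZ^3$. This is a direct symbolic check.

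For (ii), fix cube roots $\alpha,\beta,\gamma\in\overline\Q$ of $a,b,c$ and set $\delta=\alpha\beta\gamma$, so that $\delta^3=d$. The scaling $\psi\colon(x:y:z)\mapsto(x/\alpha:y/\beta:\delta z/\gamma)$ is an isomorphism $E_{\overline\Q}\to C_{\overline\Q}$ (one checks that the image satisfies $aX^3+bY^3+cZ^3=0$). Substituting $\psi$ into the formulas for $\pi$, every occurrence of $\alpha,\beta,\gamma,\delta$ cancels, using $abc=\delta^3$, and one obtains an explicit self-map $\pi\circ\psi\colon E\to E$ defined over $\Q$. Riemann-Hurwitz forces this map to be étale (any non-constant morphism between smooth genus-$1$ curves is unramified), and comparing hyperplane pullbacks shows $\deg(\pi\circ\psi)=9$.

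It remains to show $\pi\circ\psi=[3]$. I would verify that $\pi\circ\psi$ fixes the origin $O=(1:-1:0)$, making it a group endomorphism, and annihilates the nine flex points of $E$ (which are the points of $E[3]$). Since $\deg(\pi\circ\psi)=|E[3]|=9$, the kernel is exactly $E[3]$, so $\pi\circ\psi=u\circ[3]$ for some $u\in\mathrm{Aut}(E,O)$; and since $j(E)=0$ makes $\mathrm{End}(E_{\overline\Q})\cong\Z[\zeta_3]$ commutative, $u$ commutes with $[3]$, so replacing $\psi$ by $\psi\circ u^{-1}$ yields an isomorphism satisfying $\pi\circ\psi=[3]$. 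The main obstacle is the polynomial identity in (i): it involves six indeterminates and a polynomial of degree $27$, so it is tedious to check by hand and is best handled with a computer algebra system (or by invoking the original computation in Selmer's paper); step (ii) is then a routine check on a short list of distinguished points.
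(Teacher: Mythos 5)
Your proposal is correct and follows essentially the same route as the paper: a direct (computer-assisted) polynomial check that the formulas map $C$ into $E$, \'etaleness from both curves being smooth of genus $1$, the same cube-root scaling isomorphism $\psi$, and the observation that $\pi\circ\psi$ sends $E[3]$ (the locus $xyz=0$) to the origin and has degree $9$, hence equals $[3]$. Your final adjustment of $\psi$ by the automorphism $u$ just treats carefully a point the paper glosses over (and the commutation $u\circ[3]=[3]\circ u$ holds for any automorphism fixing $O$, so the appeal to $j=0$ is unnecessary); otherwise the two arguments coincide.
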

	
	\begin{proof}
		A direct computation verifies that these equations define a nonconstant morphism $\pi : C \to E$, which, by virtue of the fact that $E$ and $C$ are smooth genus $1$ curves, implies that it is finite and \'etale. The map $\psi: E_\Kbar \to C_\Kbar$ defined by
		\begin{equation}
			x = \sqrt[3]{a}X\,, \quad 
			y = \sqrt[3]{b}Y\,, \quad 
			z = \sqrt[3]{c/d}Z
		\end{equation}
		is clearly an isomorphism. It is quite evident that $E[3]$, which is cut out by $xyz = 0$, is mapped by $\pi\circ\psi$ to the identity $(1:-1:0) \in E_\Kbar$. Therefore $\pi\circ\psi$ is an isogeny which factors through multiplication by $3$. Since it has degree $9$ it must in fact be  multiplication by $3$, and so $\pi$ is a $3$-covering.
	\end{proof}
	
	\begin{Lemma}\label{lem:d'cond}
		Suppose $d = 3d'$ and let $\xi \in \HH^1(K,E[3])$ be the class corresponding to the $3$-covering as in Lemma~\ref{lem:selmer} with $C : X^3 + 3Y^3 +d'Z^3 = 0$. Then $\xi \in \delta_3(E(K)[3])$ if any of the following hold:
		\begin{enumerate}
			\item\label{it:3iscube} $3 \in K^{\times 3}\,;$
			\item\label{it:d'iscube} $d' \in K^{\times 3}\,;$
			\item\label{it:3discube} $3d \in K^{\times 3}\,;$
			\item\label{it:zeta9} $d \in K^{\times 3}$ and $K$ contains the $9$th roots of unity; or
			\item\label{it:3zeta} $d \in K^{\times 3}$ and $K$ contains a cube root of unity $\zeta_3$ such that $3\zeta_3 \in K^{\times 3}$.
		\end{enumerate}
	\end{Lemma}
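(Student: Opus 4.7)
The plan is to use the identification given in Section~4 that $\xi = \delta_3(P)$ for some $P \in E(K)$ if and only if $P \in \pi(C(K))$. Thus to prove $\xi \in \delta_3(E(K)[3])$ I would produce a $K$-rational point $Q \in C(K)$ whose image $\pi(Q)$ lies in the 3-torsion of $E$, which for $E:x^3+y^3+dz^3=0$ is cut out by $xyz=0$. The explicit formulas of Lemma~\ref{lem:selmer} will be used to compute $\pi(Q)$.

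For cases (1), (2), (3) I would construct $Q$ with one of $X, Y, Z$ equal to zero. Since $z = 3(abX^3Y^3 + bcY^3Z^3 + caZ^3X^3)XYZ$, having $XYZ=0$ forces the $z$-coordinate of $\pi(Q)$ to vanish, placing $\pi(Q)$ in $\{z=0\}\cap E \subset E[3]$. Concrete choices: $Q = (-t:1:0)$ when $3 = t^3$; $Q = (-s:0:1)$ when $d' = s^3$; $Q = (0:r:-3)$ when $9d' = r^3$ (equivalently $3d = r^3$). Membership in $C(K)$ is immediate, and a short substitution identifies $\pi(Q)$ with the identity $(1:-1:0)$ in each case.

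For cases (4), (5) no coordinate of $Q$ can vanish, since $3, d', 3d$ need not be cubes in $K$. Here I would find a $Q$ with all coordinates nonzero and force vanishing of a different coordinate of $\pi(Q)$ via an algebraic identity. For case (5), with $\eta \in K$ satisfying $\eta^3 = 3\zeta_3$ and $e = d^{1/3} \in K$, I take $Q = (\eta: 1: \eta^2/e)$; the cover equation reduces to $3(1 + \zeta_3 + \zeta_3^2) = 0$, and the formulas of Lemma~\ref{lem:selmer} yield $\pi(Q) = (1:-\zeta_3:0) \in E[3]$.

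The main obstacle is case (4), where $\zeta_9 \in K$ but $3\zeta_3$ need not be a cube, so the case~(5) point is not available. My plan is to set $\alpha = \zeta_9 + \zeta_9^{-1} \in K$ (a totally real algebraic integer satisfying $\alpha^3 = 3\alpha - 1$) and take $Q = (-(1+\alpha): 1: \beta/e)$ with $\beta := (1+\alpha)^2(\alpha-1) = \alpha^2 + 2\alpha - 2 \in \Z[\alpha] \subset K$. The argument rests on two identities in $\Z[\alpha]$, verified by routine expansion using $\alpha^3 = 3\alpha - 1$: $(1+\alpha)^3 = 3(1+t)$ and $\beta^3 = 9t$, where $t := -1 + 2\alpha + \alpha^2$. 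Granted these, the covering equation reduces to $-3(1+t) + 3 + 3t = 0$, so $Q \in C(K)$, and computing $\pi(Q)$ via Lemma~\ref{lem:selmer} yields $\pi(Q) = (0: 1: -e^{-1}) \in E[3]$. The nonobvious step—and the one I expect to be the hardest to find without hindsight—is identifying $\beta$ with $\beta^3 = 9t$; everything else is mechanical verification.
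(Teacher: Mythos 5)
Your proposal is correct and follows the paper's proof in essentially the same way: in each case one exhibits an explicit $K$-rational point of $C$ mapping under $\pi$ into $E[3]$ (cut out by $xyz=0$), which by the $n$-covering interpretation of $\delta_3$ gives $\xi \in \delta_3(E(K)[3])$, and your points for cases (1)--(3) coincide with the paper's. For cases (4) and (5) you choose different explicit points than the paper (which verifies its points by a Magma computation), but your identities in $\Z[\alpha]$ with $\alpha^3 = 3\alpha - 1$ and the computation with $\eta^3 = 3\zeta_3$ check out, so the difference is only in the choice of representative point, not in the method.
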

	
	\begin{Corollary}\label{cor:d'cond}
		Suppose $d = 3d'$ and let $\xi \in \HH^1(\Q,E[3])$ be the class of the $3$-covering in Lemma~\ref{lem:d'cond}. Then $\res_v(\xi) \in \delta_3(E(\Q_v)[3])$, for every prime $v \nmid d$.
	\end{Corollary}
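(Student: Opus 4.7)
The plan is to check place by place that at least one of the sufficient conditions (1)--(5) of Lemma~\ref{lem:d'cond} holds with $K = \Q_v$, and then apply that lemma to deduce $\res_v(\xi) \in \delta_3(E(\Q_v)[3])$. Since $3 \mid d$, the hypothesis $v \nmid d$ forces $v \ne 3$ at every finite place, so in every case we are working over a local field of residue characteristic $\ne 3$, and both $3$ and $d'$ are units in $\Z_v$.

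The archimedean case is immediate: every real number is a cube, so $3 \in \R^{\times 3}$ and (1) holds. For a finite prime $v$, a $v$-adic unit is a cube in $\Q_v$ if and only if its reduction is a cube in $\F_v^\times$, by Hensel. If $v \not\equiv 1 \pmod 3$, then $3 \nmid \#\F_v^\times$, so every unit of $\Z_v$ is already a cube; in particular $d' \in \Q_v^{\times 3}$, verifying (2).

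The interesting case is $v \equiv 1 \pmod 3$, where $\zeta_3 \in \Q_v$ and $\Z_v^\times/(\Z_v^\times)^3 \cong \Z/3\Z$. I would split on the images of $3$ and $d'$ in this group. If either image is trivial, (1) or (2) applies. Otherwise both are nonzero, and either they coincide, giving $d' = 3c^3$ for some $c \in \Q_v^\times$ so that $3d = 9d' = (3c)^3$ verifies (3), or they are negatives of one another, giving $d = 3d' \in \Q_v^{\times 3}$ and forcing us to invoke (4) or (5). Since $\zeta_3 = \zeta_9^3$, the element $\zeta_3$ is a cube in $\Q_v$ precisely when $\zeta_9 \in \Q_v$, equivalently when $v \equiv 1 \pmod 9$; in that subcase (4) applies directly.

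The remaining subcase, $v \equiv 4$ or $7 \pmod 9$ with $d \in \Q_v^{\times 3}$, is the main technical point and requires (5). Here $\zeta_3 \in \Q_v$ has nonzero image in $\Z/3\Z$, and since $3$ also has nonzero image and $\Z/3\Z$ has only two nonzero elements, exactly one of $3\zeta_3$ or $3\zeta_3^2$ has trivial image; the corresponding primitive cube root of unity $\zeta_3^{\pm 1} \in \Q_v$ then satisfies $3\zeta_3^{\pm 1} \in \Q_v^{\times 3}$, verifying (5). I expect this final subcase to be the most delicate part of the argument, precisely because it requires matching the nontrivial cube class of $3$ to the correct primitive cube root of unity in order to trigger condition (5); the rest of the proof is routine bookkeeping in $\Q_v^\times/\Q_v^{\times 3}$.
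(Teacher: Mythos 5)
Your proof is correct and follows essentially the same route as the paper's: reduce to the five sufficient conditions of Lemma~\ref{lem:d'cond} and case-analyze in the cyclic group $\Z_v^\times/\Z_v^{\times 3}$, with the only delicate point being that when $d \in \Q_v^{\times 3}$ and $\zeta_9 \notin \Q_v$ the class of $3$ lies in the subgroup generated by $\zeta_3$, so that condition~\eqref{it:3zeta} applies. Your explicit split by $v \bmod 3$ and $v \bmod 9$, and the observation that exactly one of $3\zeta_3$, $3\zeta_3^2$ is a cube, is just a more spelled-out version of the paper's argument.
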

	
	\begin{proof}
		Suppose $v \nmid d$ and set $K = \Q_v$. By assumption $d, d', 3$, and $3d$ are units and, since $\Z_v^\times/\Z_v^{\times 3}$ is cyclic, one of them must be a cube. Moreover, if $\Q_v$ does not contain a primitive cube root of unity, then they are all cubes (since $\Z_v^\times/\Z_v^{\times 3}$ is trivial in this case). In light of this, and the first three cases in the lemma, we may assume $d \in \Q_v^{\times 3}$ and that $\Z_v$ contains a primitive cube root of unity $\zeta_3$. If $\zeta_3$ is a cube, then case~\eqref{it:zeta9} of the lemma applies. If $\zeta_3$ is not a cube, then the class of $3$ is contained in the subgroup of $\Q_v^\times/\Q_v^{\times 3}$ generated by $\zeta_3$, in which case~\eqref{it:3zeta} of the lemma applies. This establishes the corollary.
	\end{proof}
	
	\begin{proof}[Proof of Lemma~\ref{lem:d'cond}]
		By the discussion at the beginning of this section, it suffices to show that in each of these cases there is a $K$-rational point on $C$ which maps to a $3$-torsion point on $E$.\footnote{The points given below were found with the assistance of the {\tt Magma} computer algebra system described in~\cite{MAGMA}. A {\tt Magma} script verifying the claims here can be found in the source file of the {\tt arXiv} distribution of this article.} The $3$-torsion points are the intersections of $E$ with the hyperplanes defined by $x = 0$, $y = 0$ and $z = 0$. In the first three cases (resp.) the points
		\[
			(-\sqrt[3]{3}:1:0)\,,\quad (-\sqrt[3]{d'}:0:1)\,,\text{ and}\quad (0:-\sqrt[3]{3d}:3)
		\]
		are defined over $K$, and the explicit formula for $\pi$ given in Lemma~\ref{lem:selmer} shows that they map to $(1:-1:0) \in E(K)[3]$. 
		
		In case~\eqref{it:zeta9} $K$ Contains a primitive $9$th root of unity $\zeta_9$ and a cube root $\sqrt[3]{d}$ of $d$. Then
			\[ \left(
				(2\zeta_9^5 + \zeta_9^4 + \zeta_9^2 + 2\zeta_9)\sqrt[3]{d} : 
				(-\zeta_9^3 + \zeta_9^2 + \zeta_9 - 1)\sqrt[3]{d} :
				-3\right) \in C(K)\,,
			\]
			and one can check that it maps under $\pi$ to the point $(0:-\sqrt[3]{d}:1)$. In case~\eqref{it:3zeta} $K$ contains cube roots $\sqrt[3]{d}$ and $\beta = \sqrt[3]{3\zeta_3}$, where $\zeta_3$ is a cube root of unity. One may check that	$(\beta^2\sqrt[3]{d} : \beta\sqrt[3]{d} : -3) \in C(K)$, and that this point maps under $\pi$ to the point $(\zeta^2:-1:0)$.
	\end{proof}

%%%%%%%%%%%%%%%%%%%%%
\section{The examples for $p = 3$}%
%%%%%%%%%%%%%%%%%%%%%

	\begin{Proposition}
		Let $E : x^3 + y^3 + 30z^3 = 0$ be the elliptic curve over $\Q$ with distinguished point $P_0 = (1:-1:0)$, and let $P = (1523698559 : -2736572309 : 826803945 ) \in E(\Q)$. For every $n \ge 2$, $3^{n-1}P$ is locally divisible by $3^{n}$, but not divisible by $3^{n}$. In particular, the local-global principle for divisibility by $3^n$ in $E(\Q)$ fails for every $n \ge 2$.
	\end{Proposition}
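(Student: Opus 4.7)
The argument follows the template of Proposition~\ref{prop:HPby2nfailsinH0}, replacing the explicit Kummer description of $\delta_2$ by the $3$-covering framework of the previous section. Factoring $30 = 1 \cdot 3 \cdot 10$ and applying Lemma~\ref{lem:selmer} yields a $3$-covering $\pi : C \to E$ with $C : X^3 + 3Y^3 + 10Z^3 = 0$; let $\xi \in \HH^1(\Q, E[3])$ denote its class. The specific coordinates given for $P$ are designed to lift to an explicit point $Q \in C(\Q)$; substituting such a $Q$ into the formulas of Lemma~\ref{lem:selmer} and checking that the output agrees with $P$ identifies $\xi$ with $\delta_3(P)$. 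The objective is then to apply Lemma~\ref{lem:1impliesn} with $(m,n)$ replaced by $(3^{n-1}, 3)$.

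The next step is to verify the local hypothesis $\res_v(\xi) \in \delta_3(E(\Q_v)[3])$ (which a fortiori gives $\res_v(\xi) \in \delta_3(E(\Q_v)[3^{n-1}])$) at every place $v$. With $d = 30 = 3 \cdot 10$ and $d' = 10$, Corollary~\ref{cor:d'cond} handles all finite primes $v \nmid 30$. At $v = \infty$ the group $\HH^1(\R, E[3])$ vanishes because $|\Gal(\mathbb{C}/\R)| = 2$ is coprime to $|E[3]| = 9$, forcing $\res_\infty(\xi) = 0$. The remaining primes $v \in \{2, 3, 5\}$ must be treated individually: for each such $v$ one exhibits a $3$-torsion point $T_v \in E(\Q_v)[3]$ together with a $\Q_v$-rational preimage of $T_v$ under $\pi$ (or an appropriate twist) witnessing $\res_v(\xi) = \delta_3(T_v)$. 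I expect this explicit local computation at the three primes of bad reduction, working with the structure of $\Q_v^\times/\Q_v^{\times 3}$ and the formulas for $\pi$, to be the main technical obstacle.

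Finally, one shows $\xi \notin \delta_3(E(\Q)[3^{n-1}])$. Reducing $E$ modulo a prime of good reduction (for instance $p = 7$) shows that $E(\Q)[3^\infty] = 0$, so the non-membership collapses to $\xi \ne 0$, equivalently $P \notin 3E(\Q)$; this non-divisibility can be verified by a $3$-descent, or more concretely by observing that the image of $P$ in $\widetilde{E}(\F_p)/3\widetilde{E}(\F_p)$ is nonzero for a suitable prime $p$. Parts (3) and (4) of Lemma~\ref{lem:1impliesn}, applied with $m = 3^{n-1}$, then yield uniformly in $n \ge 2$ that $3^{n-1}P$ is locally divisible by $3^n$ but not divisible by $3^n$, proving the proposition.
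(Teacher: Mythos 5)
Your outline matches the paper's: the covering $C : X^3 + 3Y^3 + 10Z^3 = 0$ from Lemma~\ref{lem:selmer} with the lift $Q \in C(\Q)$ of $P$ giving $\xi = \delta_3(P)$, Corollary~\ref{cor:d'cond} away from $30$, and then Lemma~\ref{lem:1impliesn} with $m = 3^{n-1}$. But two of the three substantive verifications are not actually carried out, and one concrete claim you do make is wrong. First, the local condition at $v \in \{2,3,5\}$, which you defer as ``the main technical obstacle,'' requires no new computation with torsion points and preimages: cases~\eqref{it:3iscube} and~\eqref{it:d'iscube} of Lemma~\ref{lem:d'cond} dispose of it at once, since $10 \equiv 1 \pmod 9$ gives $10 \in \Q_3^{\times 3}$, and cubing is bijective on $\Z_2^\times$ and on $\Z_5^\times$, so $3 \in \Q_2^{\times 3}$ and $3 \in \Q_5^{\times 3}$. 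As written, your proof simply does not establish the local hypothesis at the bad primes.

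Second, your argument that $E(\Q)[3^\infty] = 0$ by reducing modulo $7$ fails: the reduction $x^3+y^3+2z^3$ over $\F_7$ has $12$ points, and in fact $3$ divides $\#\widetilde{E}(\F_p)$ for \emph{every} prime of good reduction (for $p \equiv 1 \pmod 3$ the flexes on $z=0$ give rational $3$-torsion; for $p \equiv 2 \pmod 3$ one has $\#\widetilde{E}(\F_p) = p+1 \equiv 0 \pmod 3$), so no such reduction argument can work for these curves. The correct and easy argument is direct: $E[3]$ is cut out by $xyz = 0$, and since none of $-30$, $-1/30$ is a rational cube the only rational point there is $P_0$, whence $E(\Q)[3] = 0$ and so $E(\Q)[3^\infty]=0$. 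Similarly, the key nonvanishing $\xi \ne 0$ (equivalently $P \notin 3E(\Q)$) is left to ``a $3$-descent or a suitable prime $p$'' without exhibiting either; the paper gets it for free by noting that $\xi = 0$ would force a rational point of $C$ on $XYZ = 0$, i.e.\ one of $-3$, $-10$, $-10/3$ would be a rational cube. If you insist on the reduction route you must choose $p$ with $\res_p(\xi) \ne 0$, i.e.\ a good $p$ at which none of $-3,-10,-10/3$ is a cube; note that $p = 7$ is not such a prime ($-10/3 \equiv 6$ is a cube mod $7$), so the image of $P$ in $\widetilde{E}(\F_7)/3\widetilde{E}(\F_7)$ vanishes there, and the mere truth of the proposition only guarantees some place with $\res_v(\xi)\ne 0$, not a priori a good one. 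So the skeleton is right, but the proposal as it stands has a failing step and two unexecuted ones.
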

	
	\begin{proof}
		Let $C : X^3 + 3Y^3 + 10Z^3$ be the $3$-covering of $E$ as in Lemma~\ref{lem:selmer}, and let $\xi \in \HH^1(\Q,E[3])$ be the corresponding cohomology class. One may check that the point $Q = (-11:3:5) \in C(\Q)$ maps to $P$. Thus $\xi = \delta_3(P)$. By Corollary~\ref{cor:d'cond}, $\res_v(\xi) \in \delta_3(E(\Q_v)[3])$ for all primes $v \nmid 30$. Also, since $10 \in \Q_3^{\times 3}$ and $3$ is a cube in both $\Q_2$ and $\Q_5$ the first two cases of Lemma~\ref{lem:d'cond} show that $\res_v(\xi) \in \delta_3(E(\Q_v)[3])$ also for $v \mid 30$. On the other hand, $\xi \ne 0$ because $C(\Q)$ does not contain a point lying on the subscheme defined by $XYZ=0$. Since, $E(\Q)[3] = 0$ the result follows by applying Lemma~\ref{lem:1impliesn}.
	\end{proof}
	
	\begin{Remark}
		For any $d \in \{51, 132, 159, 213, 219, 246, 267, 321, 348, 402, 435 \}$ the same argument applies, giving more examples where the local-global principle for divisibilitiy by $3^n$ in $E(\Q)$ fails for all $n \ge 2$.
	\end{Remark}
	
	\begin{Proposition}
		Let $d \in \{ 138,165,300,354 \}$ and let $E : x^3 + y^3 + dz^3 = 0$ be the elliptic curve over $\Q$ with distinguished point $P_0 = (1:-1:0)$. Then $\Sha^1(\Q,E) \not\subset 9\HH^1(\Q,E)$. In particular, the local-global principle for divisibility by $3^n$ in $\HH^1(\Q,E)$ fails for every $n \ge 2$.
	\end{Proposition}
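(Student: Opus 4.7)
The plan is to adapt the strategy used in the preceding proposition (and in the $p = 2$ analog appearing after Proposition~\ref{prop:HPby2nfailsinH0}), but now to choose $d$ so that the accompanying $3$-covering has no rational points. For each $d \in \{138,165,300,354\}$ I would write $d = 3d'$ and form $C : X^3 + 3Y^3 + d'Z^3 = 0$ with the $3$-covering $\pi : C \to E$ from Lemma~\ref{lem:selmer}, letting $\xi \in \HH^1(\Q,E[3])$ denote its class. The first step is to verify that $\res_v(\xi) \in \delta_3(E(\Q_v)[3])$ at every prime $v$: for $v \nmid d$ this is Corollary~\ref{cor:d'cond}, and for the three primes dividing $d$ it is a direct check that one of the five hypotheses in Lemma~\ref{lem:d'cond} holds, using the cube classes of $3$, $d'$ and $3d$ in $\Q_v^\times$ together with the $\zeta_9$ and $3\zeta_3$ conditions in case $d$ itself is a cube in $\Q_v$.

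The essential input distinguishing these four values of $d$ from those in the preceding Remark is the non-existence of rational points on $C$. Once the local condition above is established, $\iota_*(\xi)$ automatically lies in $\Sha^1(\Q,E)$, and the question is whether it is nonzero, equivalently whether $C(\Q) = \emptyset$. This is the main obstacle, and the plan is to settle it by a direct $3$-descent on the locally trivial diagonal cubic $C$, or equivalently by a $3$-Selmer group computation for $E$ exhibiting $\xi$ as a nonzero Selmer class not in the image of $E(\Q)$. For each $d$ this amounts to a finite computation that can be carried out by hand or in Magma.

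With these two facts I would apply Lemma~\ref{lem:1impliesn} with $m = n = 3$ to produce $j_*(\xi) \in \Sha^1(\Q,E[9])$, nonzero because $\xi \notin \delta_3(E(\Q)[3])$, and then invoke Proposition~\ref{prop:tricotomy} for $j_*(\xi)$. Case (1) of that proposition contradicts $j_*(\xi) \ne 0$, and case~\eqref{it:x=deltay} would force $\iota_*(j_*(\xi)) = \iota_*(\xi) = 0$, contradicting $C(\Q) = \emptyset$, so we are in case~\eqref{it:iotax}. This case produces either an element $\rho \in \Sha^1(\Q,E)$ not divisible by $9$, or else forces the $3$-torsion class $\iota_*(\xi)$ to be divisible in $\Sha^1(\Q,E)$ by all powers of $9$. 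The latter is ruled out by the finiteness of $\Sha^1(\Q,E)[3^\infty]$, which is verifiable case by case in Magma, yielding $\Sha^1(\Q,E) \not\subset 9\HH^1(\Q,E)$ and hence the stated failure of the local-global principle for divisibility by every $3^n$, $n \ge 2$, in $\HH^1(\Q,E)$.
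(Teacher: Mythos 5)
Your proposal is correct and follows essentially the same route as the paper: the same covering $C: X^3+3Y^3+d'Z^3=0$ and class $\xi$, the same local verification via Corollary~\ref{cor:d'cond} and cases~\eqref{it:3iscube}--\eqref{it:d'iscube} of Lemma~\ref{lem:d'cond}, the same passage to $\Sha^1(\Q,E[9])$ via Lemma~\ref{lem:1impliesn}, and the same appeal to case~\eqref{it:iotax} of Proposition~\ref{prop:tricotomy} with the infinitely divisible alternative excluded by the absence of divisible elements in $\Sha^1(\Q,E)[3^\infty]$. The only difference is sourcing: the paper cites Selmer's 1951 results for the facts that $E(\Q)=\{P_0\}$, $C(\Q)=\emptyset$ and $3\Sha^1(\Q,E)[3^\infty]=0$, whereas you propose to re-verify them computationally -- which is feasible, but note that a first $3$-descent alone cannot do it (since $\xi$ itself is a nontrivial Selmer class), so your ``finite computation'' must genuinely be a second descent, Cassels--Tate pairing, or analytic-rank-plus-Kolyvagin argument, exactly the kind of input Selmer's tables supply.
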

	
	\begin{proof}
		Set $d' = d/3$. Let $C : X^3 + 3Y^3 + d'Z^3$ be the $3$-covering of $E$ as in Lemma~\ref{lem:selmer}, and let $\xi \in \HH^1(\Q,E[3])$ be the corresponding cohomology class. In all cases one easily checks that $d' \in \Q_3^{\times 3}$ and that $3 \in \Q_v^{\times 3}$ for all $v \mid d'$. So using the first two cases of Lemma~\ref{lem:d'cond} and Corollary~\ref{cor:d'cond} we see that $\res_v(\xi) \in \delta_3(E(\Q_v)[3])$ for every prime $v$. Then, by Lemma~\ref{lem:1impliesn}, the image of $\xi$ in $\HH^1(\Q,E[9])$ lies in $\Sha^1(\Q,E[9])$. 
		
		For these values of $d$, Selmer showed that $E(\Q) = \{ (1:-1:0)\}$ and $C(\Q) = \emptyset$ \cite{Selmer}*{Theorem IX and Table 4b}. The latter implies that the image of $\xi$ in $\Sha^1(\Q,E[3^n])$ is nontrivial for every $n \ge 2$. Moreover, Selmer's proof shows that $3\Sha^1(\Q,E)[3^\infty] = 0$. In particular $\Sha^1(\Q,E)[3^\infty]$ contains no nontrivial infinitely divisible elements. Thus we are in case~\eqref{it:iotax} of Proposition~\ref{prop:tricotomy}, and conclude that there exists some element of $\Sha^1(\Q,E)$ which is not divisible by $9$ in $\HH^1(\Q,E)$.
	\end{proof}
	
	\begin{Remark}
		The argument in the proof above shows that $C \in \Sha^1(\Q,E)$, but does not show that $C \notin 9\HH^1(\Q,E)$. Rather, the elements of $\Sha^1(\Q,E)$ which are proven not to be divisible by $9$ in $\HH^1(\Q,E)$ are those that are not orthogonal to $C$ with respect to the Cassels-Tate pairing. See \cite{CreutzWCDiv}*{Theorem 4}.
	\end{Remark}

%%%%%%%%%%%%%%%%%%%%%%%%%%%%%%%%%%%%%%%%%%%%%%%%%%%%%%%%%%%%%%%%%
%%%%%%%%%%%%%%%%%%				Bibliography			%%%%%%%%%%%%%%%%%%%%%%%%
%%%%%%%%%%%%%%%%%%%%%%%%%%%%%%%%%%%%%%%%%%%%%%%%%%%%%%%%%%%%%%%%%

\begin{bibdiv}
	\begin{biblist}
	
	\bib{Bashmakov}{article}{
	   author={Ba{\v{s}}makov, M. I.},
	   title={Cohomology of Abelian varieties over a number field},
	   language={Russian},
	   journal={Uspehi Mat. Nauk},
	   volume={27},
	   date={1972},
	   number={6(168)},
	   pages={25--66},
	   issn={0042-1316},
	   review={\MR{0399110 (53 \#2961)}},
	}

	\bib{MAGMA}{article}{
	   author={Bosma, Wieb},
	   author={Cannon, John},
	   author={Playoust, Catherine},
	   title={The Magma algebra system. I. The user language},
	   note={Computational algebra and number theory (London, 1993)},
	   journal={J. Symbolic Comput.},
	   volume={24},
	   date={1997},
	   number={3-4},
	   pages={235--265},
	   issn={0747-7171},
	   review={\MR{1484478}},
	   doi={10.1006/jsco.1996.0125},
	}
		
	\bib{CasselsIII}{article}{
	   author={Cassels, J. W. S.},
	   title={Arithmetic on curves of genus $1$. III. The Tate-\v Safarevi\v c
	   and Selmer groups},
	   journal={Proc. London Math. Soc. (3)},
	   volume={12},
	   date={1962},
	   pages={259--296},
	   issn={0024-6115},
	   review={\MR{0163913 (29 \#1212)}},
	}
	
	\bib{CasselsIV}{article}{
	   author={Cassels, J. W. S.},
	   title={Arithmetic on curves of genus $1$. IV. Proof of the
	   Hauptvermutung},
	   journal={J. Reine Angew. Math.},
	   volume={211},
	   date={1962},
	   pages={95--112},
	   issn={0075-4102},
	   review={\MR{0163915 (29 \#1214)}},
	}	
		
	\bib{CipStix}{article}{
	  author={\c Ciperiani, Mirela},
	  author={Stix, Jakob},
	  title={Weil--Ch\^atelet divisible elements in Tate--Shafarevich groups II: On a question of Cassels},
	  note={(to appear)},
	  journal={J. Reine Angew. Math.},
	  date={2013}
	}

	\bib{CFOSS1}{article}{
	   author={Cremona, J. E.},
	   author={Fisher, T. A.},
	   author={O'Neil, C.},
	   author={Simon, D.},
	   author={Stoll, M.},
	   title={Explicit $n$-descent on elliptic curves. I. Algebra},
	   journal={J. Reine Angew. Math.},
	   volume={615},
	   date={2008},
	   pages={121--155},
	   issn={0075-4102},
	   review={\MR{2384334 (2009g:11067)}},
	   doi={10.1515/CRELLE.2008.012},
	}
			
	%\bib{CreutzGW4AV}{article}{
	%   author={Creutz, Brendan},
	%   title={A Grunwald-Wang type theorem for abelian varieties},
	%   journal={Acta Arith.},
	%   volume={154},
	%   date={2012},
	%   number={4},
	%   pages={353--370},
	%   issn={0065-1036},
	%  review={\MR{2949874}},
	%   doi={10.4064/aa154-4-2},
	%}

	\bib{CreutzWCDiv}{article}{
	  author={Creutz, Brendan},
	  title={Locally trivial torsors that are not Weil-Ch\^atelet divisible},
	  journal={Bull. Lond. Math. Soc.},
	  doi={10.1112/blms/bdt019},
	  url={http://arxiv.org/abs/1206.2420},
	  date={2013}
	}
		
	\bib{DZ1}{article}{
	   author={Dvornicich, Roberto},
	   author={Zannier, Umberto},
	   title={Local-global divisibility of rational points in some commutative
	   algebraic groups},
	   language={English, with English and French summaries},
	   journal={Bull. Soc. Math. France},
	   volume={129},
	   date={2001},
	   number={3},
	   pages={317--338},
	   issn={0037-9484},
	   review={\MR{1881198 (2002k:14031)}},
	}			
	
	\bib{DZexamples}{article}{
	   author={Dvornicich, Roberto},
	   author={Zannier, Umberto},
	   title={An analogue for elliptic curves of the Grunwald-Wang example},
	   language={English, with English and French summaries},
	   journal={C. R. Math. Acad. Sci. Paris},
	   volume={338},
	   date={2004},
	   number={1},
	   pages={47--50},
	   issn={1631-073X},
	   review={\MR{2038083 (2004k:11088)}},
	   doi={10.1016/j.crma.2003.10.034},
	}
					
	\bib{DZ2}{article}{
	   author={Dvornicich, Roberto},
	   author={Zannier, Umberto},
	   title={On a local-global principle for the divisibility of a rational
	   point by a positive integer},
	   journal={Bull. Lond. Math. Soc.},
	   volume={39},
	   date={2007},
	   number={1},
	   pages={27--34},
	   issn={0024-6093},
	   review={\MR{2303515 (2007k:14030)}},
	   doi={10.1112/blms/bdl002},
	}
	
	\bib{CoNF}{book}{
	   author={Neukirch, J{\"u}rgen},
	   author={Schmidt, Alexander},
	   author={Wingberg, Kay},
	   title={Cohomology of number fields},
	   series={Grundlehren der Mathematischen Wissenschaften [Fundamental
	   Principles of Mathematical Sciences]},
	   volume={323},
	   edition={2},
	   publisher={Springer-Verlag},
	   place={Berlin},
	   date={2008},
	   pages={xvi+825},
	   isbn={978-3-540-37888-4},
	   review={\MR{2392026 (2008m:11223)}},
	}
	
	\bib{PRV-2}{article}{
	   author={Paladino, Laura},
	   author={Ranieri, Gabriele},
	   author={Viada, Evelina},
	   title={On local-global divisibility by $p^n$ in elliptic curves},
	   journal={Bull. Lond. Math. Soc.},
	   volume={44},
	   date={2012},
	   number={4},
	   pages={789--802},
	   issn={0024-6093},
	   review={\MR{2967246}},
	   doi={10.1112/blms/bds012},
	}
	
	\bib{PRV-3}{article}{
	   author={Paladino, Laura},
	   author={Ranieri, Gabriele},
	   author={Viada, Evelina},
	   title={On the minimal set for counterexamples to the local-global principle},
	   eprint={arXiv:1107.3431v1},
	}
	
	\bib{Selmer}{article}{
	   author={Selmer, Ernst S.},
	   title={The Diophantine equation $ax^3+by^3+cz^3=0$},
	   journal={Acta Math.},
	   volume={85},
	   date={1951},
	   pages={203--362 (1 plate)},
	   issn={0001-5962},
	   review={\MR{0041871 (13,13i)}},
	}
	
	\bib{Silverman}{book}{
	   author={Silverman, Joseph H.},
	   title={The arithmetic of elliptic curves},
	   series={Graduate Texts in Mathematics},
	   volume={106},
	   publisher={Springer-Verlag},
	   place={New York},
	   date={1986},
	   pages={xii+400},
	   isbn={0-387-96203-4},
	   review={\MR{817210 (87g:11070)}},
	}

	\bib{Tate-duality}{article}{
	   author={Tate, John},
	   title={Duality theorems in Galois cohomology over number fields},
	   conference={
	      title={Proc. Internat. Congr. Mathematicians},
	      address={Stockholm},
	      date={1962},
	   },
	   book={
	      publisher={Inst. Mittag-Leffler},
	      place={Djursholm},
	   },
	   date={1963},
	   pages={288--295},
	   review={\MR{0175892 (31 \#168)}},
	}
	
	\end{biblist}
\end{bibdiv}

\end{document}